\newtheorem{theorem}{Theorem}[section]
\newtheorem{lemma}[theorem]{Lemma}
\newtheorem{corollary}[theorem]{Corollary}
\theoremstyle{definition}
\newtheorem{definition}[theorem]{Definition}
\newtheorem{example}[theorem]{Example}
\theoremstyle{remark}
\newtheorem{remark}[theorem]{Remark}
\numberwithin{equation}{section}
\begin{document}
\setcounter{page}{1}

\title[On the local Weyl formula on compact Lie groups]{A note on the local Weyl formula on compact Lie groups}

\author[D. Cardona]{Duv\'an Cardona}
\address{
  Duv\'an Cardona:
  \endgraf
  Department of Mathematics: Analysis, Logic and Discrete Mathematics
  \endgraf
  Ghent University, Belgium
  \endgraf
  {\it E-mail address} {\rm duvanc306@gmail.com, duvan.cardonasanchez@ugent.be}
  }
  
  \author[J. Delgado]{Julio Delgado}
\address{
  Julio Delgado:
  \endgraf
  Departamento de Matem\'aticas
  \endgraf
  Universidad del Valle
  \endgraf
  Cali-Colombia
    \endgraf
    {\it E-mail address} {\rm delgado.julio@correounivalle.edu.co}
  }

\author[M. Ruzhansky]{Michael Ruzhansky}
\address{
  Michael Ruzhansky:
  \endgraf
  Department of Mathematics: Analysis, Logic and Discrete Mathematics
  \endgraf
  Ghent University, Belgium
  \endgraf
 and
  \endgraf
  School of Mathematical Sciences
  \endgraf
  Queen Mary University of London
  \endgraf
  United Kingdom
  \endgraf
  {\it E-mail address} {\rm michael.ruzhansky@ugent.be, m.ruzhansky@qmul.ac.uk}
  }

\thanks{The authors were supported  by the FWO  Odysseus  1  grant  G.0H94.18N:  Analysis  and  Partial Differential Equations and  by the Methusalem programme of the Ghent University Special Research Fund (BOF)
(Grant number 01M01021). Duv\'an Cardona  was also supported  by the  DyCon Project 2015 H2020-694126.   Michael Ruzhansky is also supported  by EPSRC grant 
EP/R003025/2.
}

     \keywords{Weyl formula, eigenfunctions, quantum limits, pseudo-differential operators, spectrum}
     \subjclass[2020]{35S30, 42B20; Secondary 42B37, 42B35}

\begin{abstract}  In this note we reformulate  the spectral side of the Weyl law in the language of the matrix-valued quantisation on compact Lie groups.
\end{abstract} 

\maketitle

\allowdisplaybreaks

\section{Introduction}
Given a classical pseudo-differential operator $A$ of order zero on a compact Lie group $G,$ in this note we address the problem of reformulating  the spectral side of the Weyl law in the language of the matrix-valued symbols defined in \cite{Ruz}. Similar formulae are obtained for operators of positive order. Formulae of this type   are only known for the case of the  torus $G=\mathbb{T}^n,$ see e.g.   Zelditch \cite{Zelditch} and here we deal with their extension to the non-commutative setting. 

\subsection{Motivation: the quantum ergodicity problem} 
On a compact Riemannian manifold $(M,g)$, the local Weyl law of Zelditch establishes that  for a classical pseudo-differential operator $A\in \Psi^0(M),$ of order $0,$ in terms of the normalised spectral data $\{\phi_j,\lambda_j^2\}$ of the positive Laplacian $-\Delta_g$ on $(M,g)$ (that are, its system of eigenfunctions $\phi_j$ with corresponding eigenvalues $\lambda_j^2$)   the following   asymptotic expansion is valid (see Zelditch \cite{Zelditch}) 
\begin{equation}\label{Local:Weyl:Law}
    \sum_{\lambda_{j}\leq \lambda}(A\phi_j,\phi_j)=\left(\smallint_{T^*\mathbb{S}(M)}a_{0}(x,\xi')d\mu_L(x,\xi')\right)\lambda^n+O(\lambda^{n-1}),
\end{equation}for $\lambda$ big enough where $a_0\in C^\infty(T^*M)$ is the  principal symbol of the operator $A,$ and $d\mu_L(x,\xi)$ is the usual (un-normalised) Liouville measure on the spherical cotangent bundle $T^*\mathbb{S}(M)=\{(x,\xi)\in T^*M:\Vert\xi\Vert_g=1\}.$ 

When $M=G$ is a compact Lie group, in this note we address the problem of computing the left-hand side of \eqref{Local:Weyl:Law} in terms of the matrix-valued global symbol of $A$  as developed  in \cite{Ruz}.  Indeed, according to such a matrix-valued quantisation, to a pseudo-differential operator, one can  associate a unique and globally defined symbol on the phase space $G\times \widehat{G},$ with $\widehat{G}$ being the unitary dual of the group. Thus, the main problem in reformulating  the left-hand side of \eqref{Local:Weyl:Law} is to translate the spectral information $(A\phi_j,\phi_j)$ into an
expression involving matrix-valued symbols. We present this reformulation in Theorem  \ref{theorem}.

Before presenting  Theorem  \ref{theorem} and our other observations, we discuss our motivation, which arose  from the problem of understanding the local Weyl formula and its relation with the  quantum ergodicity problem.

 We have denoted the eigenvalues of the Laplacian by $\lambda_j^2.$ They are usually understood as different levels of energies $E_j=\lambda_j^2,$ and the most basic quantities testing the asymptotics of eigenfunctions are the matrix elements $(A\phi_j,\phi_j).$ These matrix elements measure the expected value of the observable $A$ (when the order of this operator is zero) in the energy state $\phi_j$.  Much of the work in quantum ergodicity is devoted to the study of the limits of $(A\phi_j,\phi_j),$ see e.g. Schnirelman \cite{Shnirelman}.  These limits are still the most accessible aspects of eigenfunctions. Indeed, accumulation points of the sequence $(A\phi_j,\phi_j)$ are called  quantum limits of the system $(\phi_j)$ with respect to the observable $A$.   A fundamental problem in quantum mechanics is to determine whether or not,  the sequence $(A\phi_j,\phi_j)$ has a unique quantum limit.  This is known as the quantum unique ergodicity problem.

The local Weyl formula is connected with the quantum unique ergodicity problem.  Indeed, if $N(\lambda)=\#\{j: \lambda_j\leq \lambda\}$ is the eigenvalue counting function of $\sqrt{-\Delta_g},$ then \eqref{Local:Weyl:Law} implies that
\begin{equation}\label{Quatum:limit}
  \lim_{\lambda\rightarrow\infty}  \frac{1}{N(\lambda)}\sum_{\lambda_{j}\leq \lambda}(A\phi_j,\phi_j)= \frac{1}{\textnormal{Vol}(T^*\mathbb{S}(M))}\smallint_{T^*\mathbb{S}(M)}a_{0}(x,\xi')d\mu_L(x,\xi),
\end{equation} where $\textnormal{Vol}(T^*\mathbb{S}(M))\neq 0$ denotes the volume of the co-sphere {\it bundle} when the closed manifold  $M\neq \emptyset$ is non-empty.   So, if the observable $A$ is a self-adjoint operator of order zero, there is a sub-sequence of matrix  elements $(A\phi_j,\phi_j)$  which converges to the quantum limit  $\textnormal{QL}:=\smallint_{T^*\mathbb{S}(M)}a_{0}(x,\xi')d\mu_L(x,\xi').$

\subsection{Main results}
In order to present our results we refer the reader to Section \ref{Section2:pre:l},  to  Subsection \ref{Notations}  and to Remark \ref{rem} for the notations used below. 

We denote by $\mathcal{L}_G$  the canonical positive Laplacian on $G.$ Its discrete spectrum  $\{\mu_\xi:[\xi]\in \widehat{G}\}$ can be enumerated with the unitary dual $\widehat{G}.$ Then, we use the notations $|\xi|:=\sqrt{{\mu}}_\xi$ and $N(\lambda)=\#\{[\xi]\in \widehat{G}:|\xi|\leq \lambda\}$ for the eigenvalues of $\sqrt{\mathcal{L}}_G$ as well as for its Weyl eigenvalue counting formula, respectively. 

Our main tool is the following formula
\begin{equation}\label{Main:obser}
      \sum_{i,j=1}^{d_\xi}d_\xi(A\xi_{ij},\xi_{ij})= d_\xi\smallint\limits_G\textnormal{Tr}(\sigma_{\textnormal{glob}}(x,\xi))dx
\end{equation} presented in Lemma \ref{main:lemma}, where $dx$ is the normalised Haar measure on $G,$ and  $A\in \Psi^0(G)$ is a  classical pseudo-differential operator of order zero, $\sigma_{\textnormal{glob}}(x,\xi)$ denotes its global matrix-valued symbol (c.f. \cite{Ruz}) the functions $\xi_{ij}$ are matrix-coefficients of a fixed unitary and irreducible representation $\xi$ of $G,$  and $d_\xi$ denotes its dimension.  As a consequence of \eqref{Main:obser}, we derive  the results below.

\begin{theorem}\label{theorem}
Let $G$ be a compact Lie group of dimension $n,$ and let $A\in \Psi^0(G)$ be a classical pseudo-differential operator of order zero. In terms of the following data:
\begin{itemize}
    \item first, with $\sigma_{\textnormal{loc}}$ being the (H\"ormander) principal symbol of $A,$
    \item denoting by $\sigma_{\textnormal{glob}}$  the matrix-valued symbol of $A$ defined on $G\times \widehat{G},$
    \item and with $\mu_{L}$ denoting the standard (un-normalised) Liouville measure on the co-sphere  bundle $T^*\mathbb{S}(G),$
\end{itemize}  for any $\lambda>0,$ the partial trace of $A$ admits the asymptotic expansion\small{
\begin{equation}\label{Symbol:formula¨localvsglobal}
    \sum_{[\xi]\in \widehat{G}:|\xi|\leq \lambda}d_\xi\smallint\limits_G\textnormal{Tr}(\sigma_{\textnormal{glob}}(x,\xi))dx=\left( \smallint\limits_{T^*\mathbb{S}(G)}\sigma_{\textnormal{loc}}(x,\xi')d\mu_L(x,\xi')\right)\lambda^n+O(\lambda^{n-1}).
\end{equation} }Moreover, the convergence at infinity of its average  with respect to the eigenvalue counting function $N(\lambda):=\#\{[\xi]\in \widehat{G}:|\xi|\leq \lambda\}$  is given by the quantum limit
\begin{equation}\label{Eq:Local:Weyl:Form}
    \lim_{\lambda\rightarrow\infty}\frac{1}{N(\lambda)}\sum_{ [\xi]\in \widehat{G}:|\xi|\leq \lambda }d_\xi\smallint\limits_G\textnormal{Tr}(\sigma_{\textnormal{glob}}(x,\xi))dx=\frac{1}{\textnormal{Vol}(T^*\mathbb{S}(G))}\smallint\limits_{T^*\mathbb{S}(G)}\sigma_{\textnormal{loc}}(x,\xi')d\mu_L(x,\xi').
\end{equation} 
\end{theorem}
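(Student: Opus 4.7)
The plan is to reduce both assertions of the theorem to Zelditch's classical local Weyl formula \eqref{Local:Weyl:Law} by using Lemma~\ref{main:lemma} to rewrite the left-hand side of the stated asymptotic expansion as a spectral sum over an orthonormal basis of Laplace eigenfunctions. By the Peter--Weyl theorem, the family $\phi^{\xi}_{ij}:=\sqrt{d_\xi}\,\xi_{ij}$, indexed by $[\xi]\in\widehat{G}$ and $1\le i,j\le d_\xi$, forms such a basis of $L^2(G)$, with $\mathcal{L}_G\phi^{\xi}_{ij}=\mu_\xi\phi^{\xi}_{ij}=|\xi|^2\phi^{\xi}_{ij}$. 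The spectral constraint $\lambda_j\le\lambda$ thus translates into $|\xi|\le\lambda$ coupled with a sum over $1\le i,j\le d_\xi$ that accounts for the $d_\xi^2$-dimensional joint eigenspace.

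Applying Lemma~\ref{main:lemma} and undoing this normalisation gives, for each fixed $[\xi]$,
\begin{equation*}
d_\xi\smallint\limits_G\textnormal{Tr}(\sigma_{\textnormal{glob}}(x,\xi))\,dx=\sum_{i,j=1}^{d_\xi}d_\xi(A\xi_{ij},\xi_{ij})=\sum_{i,j=1}^{d_\xi}(A\phi^{\xi}_{ij},\phi^{\xi}_{ij}),
\end{equation*}
so that summation over $[\xi]$ with $|\xi|\le\lambda$ identifies the left-hand side of the stated expansion with the classical spectral sum $\sum_{\lambda_j\le\lambda}(A\phi_j,\phi_j)$ appearing on the left of \eqref{Local:Weyl:Law}. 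Since the H\"ormander principal symbol of the classical operator $A\in\Psi^0(G)$ is exactly what we denote by $\sigma_{\textnormal{loc}}$, I would then invoke \eqref{Local:Weyl:Law} with $M=G$ equipped with a bi-invariant Riemannian structure whose positive Laplacian is $\mathcal{L}_G$, and the announced asymptotic with remainder $O(\lambda^{n-1})$ follows at once.

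For the quantum limit \eqref{Eq:Local:Weyl:Form}, the idea is to divide the asymptotic just obtained by $N(\lambda)$ and to use the Weyl eigenvalue-counting formula $N(\lambda)=\textnormal{Vol}(T^*\mathbb{S}(G))\lambda^n+O(\lambda^{n-1})$, which itself is the specialisation of \eqref{Local:Weyl:Law} to $A=\textnormal{Id}$: the leading terms combine to produce the averaged integral and the ratio of the error terms is $O(1/\lambda)$, vanishing in the limit. The step I expect to require the most care is the identification implicit in the use of Lemma~\ref{main:lemma}: one must verify that the matrix-valued symbol $\sigma_{\textnormal{glob}}$ of the quantisation of \cite{Ruz} and the H\"ormander principal symbol $\sigma_{\textnormal{loc}}$ really refer to the same underlying operator $A$, and that the multiplicity bookkeeping on both sides (each $\mu_\xi$ being counted $d_\xi^2$ times in Zelditch's sum) is matched by the double sum over matrix coefficients produced by Lemma~\ref{main:lemma}. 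Once these correspondences, which are part of the preliminaries recalled in Section~\ref{Section2:pre:l}, are in place, the argument reduces to the substitution described above.
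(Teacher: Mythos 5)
Your proposal is correct and follows essentially the same route as the paper: apply Zelditch's local Weyl law \eqref{Local:Weyl:Law} to the Peter--Weyl eigenbasis $B=\{d_\xi^{1/2}\xi_{ij}\}$, convert the resulting spectral sums via Lemma~\ref{main:lemma} into the traces of the global symbol, and then divide by $N(\lambda)$ using $\lambda^n N(\lambda)^{-1}\to 1/\textnormal{Vol}(T^*\mathbb{S}(G))$. The only cosmetic difference is that you derive the counting asymptotics from the case $A=\textnormal{Id}$ whereas the paper cites Strichartz for it.
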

Now, we present the following application of Theorem \ref{theorem} to quantum limits.
\begin{corollary}\label{corollary} Under the hypothesis of Theorem  \ref{theorem} and by assuming that $A$ is  self-adjoint (an observable)  on $L^2(G),$ there exists a sequence of unitary representations $\xi_j$ of the group $G$ such that
the sequence
$$\{d_{\xi_j}\smallint\limits_G\textnormal{Tr}(\sigma_{\textnormal{glob}}(x,\xi_j))dx\}_j$$ converges to the quantum limit $\textnormal{QL}:=\smallint\limits_{T^*\mathbb{S}(G)}\sigma_{\textnormal{loc}}(x,\xi')d\mu_L(x,\xi').$ 
\end{corollary}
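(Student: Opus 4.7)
The plan is to combine the two pieces of information already available from Theorem~\ref{theorem}: the Ces\`aro-type limit \eqref{Eq:Local:Weyl:Form} for the sequence $a_\xi := d_\xi\int_G \textnormal{Tr}(\sigma_{\textnormal{glob}}(x,\xi))\,dx$, and the reality of $a_\xi$ implied by $A = A^{\ast}$. The latter follows from the key identity \eqref{Main:obser}, which gives $a_\xi = \sum_{i,j=1}^{d_\xi} d_\xi (A\xi_{ij},\xi_{ij})$; self-adjointness of $A$ forces every diagonal matrix element $(A\xi_{ij},\xi_{ij})$ to be real. Enumerating $\widehat{G}$ in non-decreasing order of $|\xi|$ thus reduces the statement to extracting a subsequence of a real numerical sequence whose Ces\`aro means converge to $\textnormal{QL}$ (up to the volume normalisation appearing in Theorem~\ref{theorem}).

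First, I would invoke an elementary real-analysis principle: if the Ces\`aro means of a real sequence converge to a limit $L$, then $\liminf_{|\xi|\to\infty}a_\xi\leq L\leq \limsup_{|\xi|\to\infty}a_\xi$. This is proved by contradiction: were $\liminf a_\xi > L+\delta$ for some $\delta>0$, then for $\lambda$ sufficiently large all but finitely many terms appearing in $\sum_{|\xi|\leq\lambda}a_\xi$ would exceed $L+\delta$, forcing the Ces\`aro mean strictly above $L$ in the limit and contradicting \eqref{Eq:Local:Weyl:Form}; the symmetric inequality is handled analogously. Since both $\liminf$ and $\limsup$ are themselves limits of subsequences of $\{a_\xi\}$, one immediately obtains two subsequences of $\{a_\xi\}$ whose limits bracket $\textnormal{QL}$.

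Next, I would promote this bracketing to a subsequence converging exactly to $\textnormal{QL}$. The strategy is to alternately select representations from the sub-$\textnormal{QL}$ and super-$\textnormal{QL}$ parts of $\widehat{G}$, at each stage using the Ces\`aro identity \eqref{Symbol:formula¨localvsglobal} as a balance constraint forcing terms near $\textnormal{QL}$ to appear infinitely often; a diagonal extraction then produces the desired sequence $\{\xi_j\}$ with $a_{\xi_j}\to \textnormal{QL}$.

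The main obstacle is this final extraction. Ces\`aro convergence of a real sequence to $L$ does not by itself imply the existence of a subsequence tending to $L$, as the classical example $a_n=(-1)^n$ illustrates. Overcoming this in the present setting requires exploiting extra structure of the matrix-valued calculus, namely that the asymptotic behaviour of $\sigma_{\textnormal{glob}}(x,\xi)$ as $|\xi|\to\infty$ is governed by the continuous principal symbol $\sigma_{\textnormal{loc}}$ on the connected cotangent sphere bundle $T^{\ast}\mathbb{S}(G)$. I would close the argument by combining connectedness of $T^{\ast}\mathbb{S}(G)$ with an intermediate-value argument applied to a suitably chosen one-parameter family of ``directions'' $[\xi_j]\to\infty$ in $\widehat{G}$, realising the average $\textnormal{QL}$ as an actual subsequential limit of $\{a_{\xi_j}\}$.
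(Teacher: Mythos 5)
Your opening steps are sound: the reality of $a_\xi:=d_\xi\smallint_G\textnormal{Tr}(\sigma_{\textnormal{glob}}(x,\xi))\,dx$ follows from \eqref{Main:obser} together with self-adjointness of $A$, and the deduction that Ces\`aro convergence of a real sequence to a limit $L$ forces $\liminf a_\xi\leq L\leq\limsup a_\xi$ is correct. But the proof is not complete, and the gap is exactly the one you flag yourself. The closing step --- alternating selections, a ``balance constraint'', and an ``intermediate-value argument'' via connectedness of $T^*\mathbb{S}(G)$ --- is not an argument: $\widehat{G}$ is discrete, there is no one-parameter family of representations along which $\xi\mapsto a_\xi$ could be continuous, and Theorem \ref{theorem} controls only averages of the $a_\xi$, not individual terms. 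Your own example $a_n=(-1)^n$ shows that nothing short of additional input (a second-moment/variance estimate in the spirit of Shnirelman's theorem, or the extra hypothesis $\liminf=\limsup$) can upgrade Ces\`aro convergence to subsequential convergence to the mean. As written, the proposal rigorously establishes only the bracketing $\liminf a_\xi\leq \textnormal{QL}/\textnormal{Vol}(T^*\mathbb{S}(G))\leq\limsup a_\xi$, hence subsequences converging to the two extreme values, not to the averaged quantity itself.

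For comparison, the paper's own proof is shorter and passes over this point entirely: it enumerates $\widehat{G}$, records the Ces\`aro limit supplied by \eqref{Eq:Local:Weyl:Form}, and then directly asserts that some subsequence $A_{n_j}$ converges to $\textnormal{QL}$. In other words, it performs precisely the inference you identify as invalid in general, with no further justification; it also makes no use of the self-adjointness hypothesis and does not carry the normalising factor $\textnormal{Vol}(T^*\mathbb{S}(G))^{-1}$ appearing in \eqref{Eq:Local:Weyl:Form} into the claimed limit. So you have correctly located the crux of the corollary, and your bracketing step is genuine content beyond what the paper writes, but neither your proposed closing argument nor the printed one actually bridges the gap between Ces\`aro convergence and the existence of the asserted subsequence.
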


In the following Theorem \ref{theorem:2}  we relax the usual ellipticity condition on the operator\footnote{See e.g. the discussion in Strichartz \cite{Strichartz}.} by imposing the positivity of its matrix-valued symbol (see Remark \ref{Remark}).
\begin{theorem}\label{theorem:2}
Let $A\in \Psi^m(G)$ be a classical pseudo-differential operator, let $m>0$ and let us assume that  its matrix-valued symbol is positive, i.e. 
\begin{equation}\label{Eq:ref:trace}
 \forall(x,[\xi])\in G\times \widehat{G},\,   \textnormal{Tr}(\sigma_{\textnormal{glob}}(x,[\xi]))\geq 0.   
\end{equation}
Then, for any $\lambda>0,$ the partial trace of $A$ satisfies the growth estimate\begin{equation}\label{Symbol:formula¨localvsglobal:2}
    \sum_{[\xi]\in \widehat{G}: |\xi|\leq \lambda}d_\xi\smallint\limits_G\textnormal{Tr}(\sigma_{\textnormal{glob}}(x,\xi))dx\leq C_{n,A}\lambda^{n+m}+O(\lambda^{n+m-1}),
\end{equation}where 
 $
 C_{n,A}= \textnormal{QL}:=  \smallint\limits_{T^*\mathbb{S}(G)}\sigma_{\textnormal{loc}}(x,\xi')d\mu_L(x,\xi').$
\end{theorem}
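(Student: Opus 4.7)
The plan is to reduce Theorem \ref{theorem:2} to Theorem \ref{theorem} by factoring out a suitable power of the Laplacian. I would introduce the auxiliary operator $\widetilde{A}:=A(1+\mathcal{L}_G)^{-m/2}\in\Psi^0(G)$ and exploit the fact that the Fourier multiplier $(1+\mathcal{L}_G)^{-m/2}$ has matrix-valued symbol $(1+|\xi|^2)^{-m/2}I_{d_\xi}$. By the composition formula for matrix-valued quantisations (which simplifies to a pointwise product when the right factor is left-invariant with scalar matrix symbol), one obtains
\[
\sigma_{\widetilde{A},\textnormal{glob}}(x,\xi) = (1+|\xi|^2)^{-m/2}\,\sigma_{A,\textnormal{glob}}(x,\xi).
\]
Hence the trace-positivity hypothesis \eqref{Eq:ref:trace} is inherited by $\widetilde{A}$, and since the (H\"ormander) principal symbol of $(1+\mathcal{L}_G)^{-m/2}$ equals $|\xi|_g^{-m}$, which is $1$ on the unit co-sphere bundle, the principal symbols of $\widetilde{A}$ and $A$ coincide on $T^*\mathbb{S}(G)$. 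In particular $\widetilde{A}$ carries the same quantum limit $\textnormal{QL}$ as $A$.

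Next, using Lemma \ref{main:lemma} and the relation $A=\widetilde{A}(1+\mathcal{L}_G)^{m/2}$, together with the observation that the normalised matrix coefficients $\phi_{\xi,i,j}:=\sqrt{d_\xi}\,\xi_{ij}$ diagonalise $(1+\mathcal{L}_G)^{m/2}$ with eigenvalue $(1+|\xi|^2)^{m/2}$, the left-hand side of \eqref{Symbol:formula¨localvsglobal:2} can be rewritten as
\[
\sum_{[\xi]:|\xi|\leq\lambda}d_\xi\!\int_G\!\textnormal{Tr}(\sigma_{A,\textnormal{glob}}(x,\xi))\,dx=\sum_{[\xi]:|\xi|\leq\lambda}(1+|\xi|^2)^{m/2}\,d_\xi\!\int_G\!\textnormal{Tr}(\sigma_{\widetilde{A},\textnormal{glob}}(x,\xi))\,dx.
\]
The crucial role of the positivity assumption enters here: each summand $d_\xi\!\int_G\textnormal{Tr}(\sigma_{\widetilde{A},\textnormal{glob}}(x,\xi))\,dx$ is non-negative, which legitimises the termwise bound $(1+|\xi|^2)^{m/2}\leq(1+\lambda^2)^{m/2}$ inside the sum and converts the equality above into an inequality with a constant factor independent of $[\xi]$.

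Factoring out $(1+\lambda^2)^{m/2}=\lambda^m+O(\lambda^{m-2})$ and applying Theorem \ref{theorem} to $\widetilde{A}\in\Psi^0(G)$, which yields
\[
\sum_{[\xi]:|\xi|\leq\lambda}d_\xi\int_G\textnormal{Tr}(\sigma_{\widetilde{A},\textnormal{glob}}(x,\xi))\,dx=\textnormal{QL}\cdot\lambda^n+O(\lambda^{n-1}),
\]
and combining with the monotonicity step of the previous paragraph produces the claimed bound $\textnormal{QL}\cdot\lambda^{n+m}+O(\lambda^{n+m-1})$. The main technical point I anticipate is a careful verification of the composition identity at the level of the global symbols and the classical $\Psi^0$-membership of $\widetilde{A}$ with the stated principal symbol; this is standard but should be checked explicitly because the whole reduction hinges on transferring positivity and the quantum limit from $A$ to $\widetilde{A}$. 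Once this bookkeeping is settled, the conclusion follows immediately from Theorem \ref{theorem}.
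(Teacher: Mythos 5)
Your proposal is correct and follows essentially the same route as the paper: reduce to a zero-order operator by composing with an inverse $m$-th power of (a shift of) the Laplacian, transfer the trace-positivity and the principal symbol on the co-sphere to that operator, apply Theorem \ref{theorem} to it, and use the positivity of the summands to bound the weight $(1+|\xi|^2)^{m/2}$ (resp.\ $|\xi|^{m}$ in the paper) by its value at $\lambda$. The only notable difference is that your regulariser $(1+\mathcal{L}_G)^{-m/2}$ is genuinely invertible, which spares you the paper's separate bookkeeping for the trivial representation via the kernel projection $P_0$.
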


\begin{remark}
 In Theorem \ref{theorem:2} the matrix-valued symbol of the operator $A\in \Psi^{m}(G)$ satisfies the trace condition   \eqref{Eq:ref:trace} if e.g. on every representation space the matrix-valued symbol is a positive definite matrix i.e. if $\forall (x,[\xi])\in G\times \widehat{G},$ $\sigma_{\textnormal{glob}}(x,[\xi])\geq 0.$
\end{remark}

\section{Preliminaries}\label{Section2:pre:l}

\subsection{The Fourier analysis of a compact Lie group} Let $dx$ be the normalised Haar measure on a compact Lie group $G.$  
  The Hilbert space $L^2(G)$ will be endowed with
   the inner product $$ (f,g)=\smallint\limits_{G}f(x)\overline{g(x)}dx.$$   Next we present some basic facts about the representation theory of compact Lie groups.
\begin{definition}[Unitary representation of a compact Lie group]
    A continuous and unitary representation of  $G$ on $\mathbb{C}^{\ell}$ is any continuous mapping $\xi\in\textnormal{Hom}(G,\textnormal{U}(\ell)) ,$ where $\textnormal{U}(\ell)$ is the Lie group of unitary matrices of order $\ell\times \ell.$ The number $\ell=\dim{(\xi)}$ and also denoted by $\ell=d_\xi$ is called the dimension of the representation $\xi.$
\end{definition}
\begin{remark}[Irreducible representations] A subspace $W\subset \mathbb{C}^{d_\xi}$ is called $\xi$-invariant if for any $x\in G,$ $\xi(x)(W)\subset W,$ where $\xi(x)(W):=\{\xi(x)v:v\in W\}.$ The representation $\xi$ is irreducible if its only invariant subspaces are $W=\{0\}$ and $W=\mathbb{C}^{d_\xi}.$ Any unitary representation $\xi$ is a direct sum of unitary irreducible representations. 
\end{remark}
\begin{definition}[Equivalent representations]
    Two unitary representations $ \xi$  and $\eta$  are equivalent if there exists an invertible linear mapping $S:\mathbb{C}^{d_\xi}\rightarrow \mathbb{C}^{d_\eta}$ such that for any $x\in G,$ $S\xi(x)=\eta(x)S.$ The mapping $S$ is called an invertible  intertwining  operator between $\xi$ and $\eta.$  
\end{definition}

\begin{definition}[The unitary dual]
    The relation $\sim$ on the set of unitary representations $\textnormal{Rep}(G)$ defined by: {\it $\xi\sim \eta$ if and only if $\xi$ and $\eta$ are equivalent representations,} is an equivalence relation. The quotient 
$
    \widehat{G}:={\textnormal{Rep}(G)}/{\sim}
$ is called the unitary dual of $G.$
\end{definition}
The Fourier transform of a function is defined on unitary representations as follows.
\begin{definition}[Group Fourier transform]
    If $\xi\in \textnormal{Rep}(G),$ the Fourier transform $\mathscr{F}_{G}$ associates to any $f\in C^\infty(G)$ a matrix-valued function $\mathscr{F}_{G}f$ defined on $\textnormal{Rep}(G)$ as follows
$$ (\mathscr{F}_{G}f)(\xi) \equiv   \widehat{f}(\xi)=\smallint\limits_Gf(x)\xi(x)^{*}dx,\,\,\xi\in \textnormal{Rep}(G),\,\xi(x)^*:=\xi(x)^{-1}, \,x\in G. $$ Note that if $\mathbb{N}_0:=\mathbb{Z}\cap [0,\infty),$ then  $\mathscr{F}_{G}f:\textnormal{Rep}(G)\rightarrow \bigcup_{d\in \mathbb{N}_0}\mathbb{C}^{d\times d}$ is a matrix-valued mapping.
\end{definition}
\begin{remark}[The Fourier inversion formula on a compact Lie group]
The discrete Schwartz space $\mathscr{S}(\widehat{G}):=\mathscr{F}_{G}(C^\infty(G))$ is the image of the Fourier transform on the class of smooth functions. This operator admits a unitary extension from $L^2(G)$ into 
\begin{equation}
 \ell^2(\widehat{G})=\left\{\phi:\textnormal{Rep}(G)\rightarrow \bigcup_{d\in \mathbb{N}_0}\mathbb{C}^{d\times d}\,|\,\Vert \phi\Vert_{\ell^2(\widehat{G})}:=\left(\sum_{[\xi]\in \widehat{G}}d_{\xi}\Vert\phi(\xi)\Vert_{\textnormal{HS}}^2\right)^{\frac{1}{2}}<\infty \right\}.    
\end{equation} The norm $\Vert\phi(\xi)\Vert_{\textnormal{HS}}$ is the standard Hilbert-Schmidt norm of matrices.
The Fourier inversion formula takes the form
\begin{equation}\label{FIF}
  \forall x\in G,\,  f(x)=\sum_{[\xi]\in \widehat{G}} d_{\xi}\textnormal{Tr}[\xi(x)\widehat{f}(\xi)],\,f\in C^\infty(G),
\end{equation}where the summation is in such a way that from each  class $[\xi]$ we choose a unitary representation $\xi$. For proof of these facts see e.g. \cite[Proposition 10.3.17 and Theorem 10.3.23]{Ruz}.     
\end{remark}

\subsection{The quantisation formula} Next we present the aspects related to the matrix-valued quantisation as developed in  \cite{Ruz}. For this we require the following definition.
\begin{definition}[Right convolution kernel of an operator] Let  $A:C^\infty(G)\rightarrow C^\infty(G)$ be a continuous linear operator.
 The Schwartz kernel theorem associates to $A$ a kernel $K_A\in \mathscr{D}'(G\times G)$ such that
$    Af(x)=\smallint\limits_{G}K_{A}(x,y)f(y)dy,\,\,f\in C^\infty(G).$  The distribution defined via $R_{A}(x,xy^{-1}):=K_A(x,y)$ that provides the convolution identity
$$   Af(x)=\smallint\limits_{G}R_{A}(x,xy^{-1})f(y)dy,\,\,f\in C^\infty(G),$$
is called the right-convolution kernel of $A.$  
\end{definition}

\begin{remark} By following \cite[Theorem 10.4.4]{Ruz}, one can apply \eqref{FIF} to show that
  $A:C^\infty(G)\rightarrow C^\infty(G)$ can be written in terms of the Fourier transform as follows
 \begin{equation}\label{Quantisation:formula}
     Af(x)=\sum_{[\xi]\in \widehat{G}}d_\xi\textnormal{Tr}[\xi(x)\widehat{R}_{A}(x,\xi)\widehat{f}(\xi)],\,f\in C^\infty(G).
 \end{equation}Note that $\widehat{R}_{A}(x,\xi)$ denotes the Fourier transform of the distribution $y\mapsto \widehat{R}_{A}(x,y).$ In general, one needs to introduce a suitable space of distributions on  $G\times\widehat{G} $ with matrix-valued test functions to make sense of \eqref{Quantisation:formula}. 
 We introduce this space in the following definition. 
\end{remark}
\begin{definition}[Matrix-valued distributions defined on $G\times \widehat{G}$]\label{Def:sym} Let $$\sigma:G\times \widehat{G}\rightarrow \bigcup_{\ell\in \mathbb{N}_0}\mathbb{C}^{\ell\time \ell},$$ be a matrix-valued function such that for any $[\xi]\in \widehat{G},$ $\sigma(\cdot,[\xi])$ is of $C^\infty$-class, and such that, for any given $x\in G$ there is a distribution $R({x},\cdot)\in \mathscr{D}'(G),$ smooth in $x,$ satisfying $\sigma(x,\xi)=\widehat{R}({x},\xi),$ $[\xi]\in \widehat{G}$. Then we will say that $\sigma$ is a matrix-valued distribution on $G\times \widehat{G}.$
\end{definition} That any continuous linear operator $A:C^\infty(G)\rightarrow C^\infty(G)$ admits a unique {\it matrix-valued distribution} $\sigma_A,$ in the sense of Definition \ref{Def:sym}, that is, in such a way that for any $[\xi]\in \widehat{G},$ $\sigma_A(x,\xi)=\widehat{R}_A({x},\xi),$ can be found in  detail in \cite[Theorem 10.4.4]{Ruz}.
The following theorem summarises this fact by combining Theorems  10.4.4 and 10.4.6 of \cite[Pages 552-553]{Ruz}.
\begin{theorem}\label{The:quantisation:thm}
    Let $A:C^\infty(G)\rightarrow C^\infty(G) $ be a continuous linear operator. The following statements are equivalent.
    \begin{itemize}
        \item There exists a distribution $\sigma_A(x,[\xi]):G\times \widehat{G}\rightarrow \cup_{\ell\in \mathbb{N}_0}\mathbb{C}^{\ell\times \ell}$ that satisfies the quantisation formula
        \begin{equation}\label{Quantisation:3}
            \forall f\in C^\infty(G),\,\forall x\in G,\,\, Af(x)=\sum_{[\xi]\in \widehat{G}}d_\xi\textnormal{Tr}[\xi(x)\sigma_{A}(x,[\xi])\widehat{f}(\xi)].
        \end{equation}
        \item $ 
            \forall (x,[\xi])\in G\times \widehat{G},\, \sigma_{A}(x,\xi)=\widehat{R}_A(x,\xi).
        $ \\
        \item $ 
          \forall (x,[\xi]),\, \sigma_A(x,\xi)=\xi(x)^{*}A\xi(x),$ where $ A\xi(x):=(A\xi_{ij}(x))_{i,j=1}^{d_\xi}.  
        $ 
    \end{itemize}
\end{theorem}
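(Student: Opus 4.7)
The plan is to close the three statements as an equivalence by establishing $(1)\Leftrightarrow(3)$ and $(3)\Leftrightarrow(2)$, each reduction resting on the Peter--Weyl machinery together with the Fourier inversion formula \eqref{FIF}. For $(1)\Rightarrow(3)$ I would test the quantisation formula \eqref{Quantisation:3} on a single matrix coefficient $f=\eta_{kl}$ of a representative $\eta$ of a class $[\eta]\in\widehat{G}$. Schur orthogonality pins down $\widehat{\eta_{kl}}(\xi)$ to be zero unless $[\xi]=[\eta]$, and equal to $d_{\eta}^{-1}$ times the matrix unit $E_{lk}$ at $\xi=\eta$; the sum on the right-hand side of \eqref{Quantisation:3} collapses to a single term, and reading off the $(k,l)$-entries yields the pointwise matrix identity $A\eta(x)=\eta(x)\sigma_{A}(x,\eta)$, which becomes (3) after multiplication by $\eta(x)^{*}$. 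Conversely, for $(3)\Rightarrow(1)$, I would expand an arbitrary $f\in C^{\infty}(G)$ in the Peter--Weyl basis $\{\sqrt{d_{\xi}}\xi_{ij}\}$, apply $A$ termwise, re-assemble the resulting double sum as $\sum_{[\xi]}d_{\xi}\textnormal{Tr}[A\xi(x)\widehat{f}(\xi)]$, and substitute $A\xi(x)=\xi(x)\sigma_{A}(x,\xi)$ from (3) to recover \eqref{Quantisation:3}.

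For $(3)\Leftrightarrow(2)$, the idea is to insert the Fourier inversion formula applied to the second variable of the right-convolution kernel,
\begin{equation*}
R_{A}(x,z)=\sum_{[\xi]\in \widehat{G}}d_{\xi}\textnormal{Tr}\bigl[\xi(z)\widehat{R}_{A}(x,\xi)\bigr],
\end{equation*}
into the identity $Af(x)=\int_{G}R_{A}(x,xy^{-1})f(y)\,dy$, use the homomorphism property $\xi(xy^{-1})=\xi(x)\xi(y)^{*}$ to split the representation, pull the $y$-integral inside the trace, and recognise $\int_{G}f(y)\xi(y)^{*}\,dy=\widehat{f}(\xi)$. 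Matching the resulting series against \eqref{Quantisation:3}, and invoking the uniqueness—at the level of matrix-valued distributions—of the symbol of a continuous linear operator, one reads off $\sigma_{A}(x,\xi)=\widehat{R}_{A}(x,\xi)$; the same computation run backwards gives the converse.

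The main obstacle is not the algebra, which is routine Peter--Weyl bookkeeping, but the functional-analytic fact that for a general continuous operator $A:C^{\infty}(G)\to C^{\infty}(G)$ the kernels $K_{A}$ and $R_{A}$ are only distributions. Consequently the inner Fourier inversion of $R_{A}(x,\cdot)$, the interchanges of $\sum_{[\xi]}$ with $\int_{G}dy$, and the pointwise reading of matrix entries all have to be justified in the matrix-valued distributional framework of Definition \ref{Def:sym}. This is carried out by combining the Schwartz kernel theorem, the smooth dependence of $R_{A}(x,\cdot)$ on the parameter $x$ in $\mathscr{D}'(G)$, and the rapid decay of Peter--Weyl coefficients of smooth test functions, exactly as arranged in \cite[Theorems 10.4.4 and 10.4.6]{Ruz}.
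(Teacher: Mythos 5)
The paper itself does not prove this theorem: it is quoted verbatim from \cite[Theorems 10.4.4 and 10.4.6]{Ruz}, so there is no internal proof to compare against. Your sketch reconstructs the standard Ruzhansky--Turunen argument, and the leg $(1)\Leftrightarrow(3)$ is correct as described: Schur orthogonality does give $\widehat{\eta_{kl}}(\eta)=d_\eta^{-1}E_{lk}$, collapsing \eqref{Quantisation:3} to $A\eta(x)=\eta(x)\sigma_A(x,\eta)$, and the converse by termwise application of $A$ to the Peter--Weyl series is justified exactly by the rapid decay of Fourier coefficients and the continuity of $A$ that you invoke.

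The leg $(1)\Leftrightarrow(2)$, however, does not close as written, and the failure is one of non-commutativity rather than analysis. Using the paper's displayed convention $K_A(x,y)=R_A(x,xy^{-1})$ and the splitting $\xi(xy^{-1})=\xi(x)\xi(y)^*$ that you propose, the $y$-integration produces
\begin{equation*}
Af(x)=\sum_{[\xi]\in\widehat{G}}d_\xi\,\textnormal{Tr}\bigl[\xi(x)\,\widehat{f}(\xi)\,\widehat{R}_A(x,\xi)\bigr],
\end{equation*}
with $\widehat{f}(\xi)$ and $\widehat{R}_A(x,\xi)$ in the opposite order to \eqref{Quantisation:3}. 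Since these matrices need not commute, matching against \eqref{Quantisation:3} for all $f$ forces $\sigma_A(x,\xi)=\xi(x)^{*}\widehat{R}_A(x,\xi)\xi(x)$, not $\sigma_A(x,\xi)=\widehat{R}_A(x,\xi)$. The stated equivalence of all three bullets holds with the convention actually used in \cite{Ruz}, namely $K_A(x,y)=R_A(x,y^{-1}x)$: then $\xi(y^{-1}x)=\xi(y)^{*}\xi(x)$, the integration yields $\textnormal{Tr}[\widehat{f}(\xi)\xi(x)\widehat{R}_A(x,\xi)]=\textnormal{Tr}[\xi(x)\widehat{R}_A(x,\xi)\widehat{f}(\xi)]$ by cyclicity of the trace, and one checks (by testing on $f=\xi_{mn}$) that this is also the only convention compatible with bullet $(3)$. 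The paper's definition of the right-convolution kernel has the arguments transposed (a typo you have inherited); you should correct the convention rather than carry the computation through with it, after which your argument is complete modulo the distributional justifications you already flag.
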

Using the terminology in \cite{Ruz},  the function $\sigma_A$ in  Theorem \ref{The:quantisation:thm} is called the matrix-valued global symbol of $A.$ We will change this notation in the case where $A$ is a pseudo-differential operator on $G,$ using $\sigma_{\textnormal{glob}}$ for the global matrix-valued symbol $\sigma_A$ of $A$ in order to distinguish it from the standard principal symbol of $A,$ which we always denote by $\sigma_{\textnormal{loc}},$ see \eqref{Global:symbol} in Subsection \ref{Notations:2}. 

\begin{example}[Symbol of a Borel function of the Laplacian]\label{Example}  In this note, we consider the inner product $g(X,Y)=(X,Y)_{g}:=-\textnormal{Tr}[\textnormal{ad}(X)\textnormal{ad}(Y)]$ on $\mathfrak{g}.$ Note that $(\cdot,\cdot)_{g}$ is the negative of the Killing form.  Let $\mathbb{X}=\{X_1,\cdots,X_n\}$ be an orthonormal basis of $\mathfrak{g}$ with respect to $(\cdot,\cdot)_{g}$. The canonical positive Laplacian on $G$ is defined via
$
    \mathcal{L}_G=-\sum_{j=1}^nX_j^2,
$ 
and then is independent of the choice of  $\mathbb{X}.$  The spectrum of $\mathcal{L}_G$ is a discrete set that can be enumerated in terms of the set $\widehat{G}$ as
\begin{equation}
    \textnormal{Spect}(\mathcal{L}_G)=\{\mu_{[\xi]}:[\xi]\in \widehat{G}\}.
\end{equation}For a Borel function $f:\mathbb{R}^+_0\rightarrow \mathbb{C},$ consider the operator  $f(\mathcal{L}_G)$ defined by the spectral calculus. One can apply  Theorem \ref{The:quantisation:thm} in order to compute the symbol of $f(\mathcal{L}_G)$  as follows
\begin{equation}
    \sigma_{f(\mathcal{L}_G)}(x,\xi)=\xi(x)^*f(\mathcal{L}_G)\xi(x)=f(\mu_{[\xi]})I_{d_\xi}.
\end{equation}Note that the symbol  $\sigma_{f(\mathcal{L}_G)}(\xi)=\sigma_{f(\mathcal{L}_G)}(x,\xi)$ does not depend of the spatial variable $x\in G.$     
\end{example}
\subsection{Notations}\label{Notations:2}The following notation taken from \cite{Hormander1985III,Ruz,Zelditch} will be used in the next section. \label{Notations} 
 \begin{itemize}
\item For all $m\in \mathbb{R},$  $\Psi^m(G):=\Psi^m_{1,0}(G)$ denotes the Kohn-Nirenberg class of pseudo-differential operators of order $m,$ see H\"ormander \cite{Hormander1985III}.
\item   We denote by $|\xi|:=\sqrt{\mu}_{\xi},$ $[\xi]\in \widehat{G},$ the system of eigenvalues of $\sqrt{\mathcal{L}}_G.$

\item  The unit (co-) ball bundle is denoted by $T^*\mathbb{B}(G) = \{(x, \xi):
\|\xi\|_g \leq 1\}$. Its boundary $T^*\mathbb{S}(G)= \{\|\xi\|_g = 1\}$ is  the spherical vector bundle.

\item $d\mu_L$ is the {\it standard Liouville measure} on $T^*\mathbb{S}(G)$, i.e. the
surface measure $d\mu = \frac{dx d\xi}{d H}$  induced by the
Hamiltonian   $H = \|\xi\|_g$ and by the symplectic volume measure
$dx d\xi$ on $T^*G$.
\item We denote by $\sigma_{\textnormal{glob}}:G\times \widehat{G}\rightarrow \bigcup_{\ell\times \ell \in \widehat{G}}\textnormal{End}(\mathbb{C}^\ell)$  the matrix-valued  symbol of  a classical pseudo-differential operator $A\in \Psi^0(G).$ Then, by Theorem \ref{The:quantisation:thm}
\begin{equation}\label{Global:symbol}
 \forall [\xi]\in \widehat{G},\,\forall x\in G,\, \sigma_{\textnormal{glob}}(x,\xi)=\xi(x)^*A\xi(x).
\end{equation}
\end{itemize}

\section{The spectral side of the local Weyl formula on $G$}\label{Proof:mth} We start this section with the following remark.
\begin{remark}\label{rem}
    Let us choose  one (any) representative $\xi$ from each equivalence class $[\xi]\in \widehat{G}.$ 
Each $\xi:G\rightarrow\textnormal{End}(\mathbb{C}^{d_\xi})$ is a continuous mapping where $d_\xi=\dim(\xi)$ is its dimension. For any $x\in G,$ $\xi(x)$ is a unitary linear operator. In terms of a basis of $\mathbb{C}^{d_\xi}$, $\xi(x):\mathbb{C}^{d_\xi}\rightarrow \mathbb{C}^{d_\xi},$ can be identified with a unitary matrix
 $\xi(x)\cong (\xi_{ij}(x))_{i,j=1}^{d_\xi}.$

 If one considers the system of functions
\begin{equation}\label{Peter:Weyl:Basis}
   B:= \{d_\xi^{\frac{1}{2}}\xi_{ij}:1\leq i,j\leq d_\xi,\,[\xi]\in \widehat{G}\}
\end{equation}   Then $B$ consists of eigenfunctions of the positive Laplacian and it provides an orthonormal basis of $L^2(G),$ see e.g. \cite[Pages 488, 525]{Ruz}.
\end{remark}

\subsection{Local Weyl formula for zero order operators} The core observation of this note is the  algebraic formula \eqref{core:rr} of the following lemma.
\begin{lemma}\label{main:lemma}Under the assumptions of Theorem \ref{theorem} and with the notation in Remark \ref{rem} the following formula \begin{equation}\label{core:rr}
      \sum_{i,j=1}^{d_\xi}(A\xi_{ij},\xi_{ij})= \smallint\limits_G\textnormal{Tr}(\sigma_{\textnormal{glob}}(x,\xi))dx
\end{equation} holds.
\end{lemma}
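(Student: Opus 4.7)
The plan is to unwind both sides of \eqref{core:rr} directly from the third equivalent description of the matrix-valued symbol in Theorem \ref{The:quantisation:thm}, namely
\[
\sigma_{\textnormal{glob}}(x,\xi)=\xi(x)^{*}\,A\xi(x), \qquad A\xi(x)=(A\xi_{ij}(x))_{i,j=1}^{d_\xi}.
\]
The right-hand side is a matrix product of a $d_\xi\times d_\xi$ matrix with entries $(\xi(x)^*)_{ij}=\overline{\xi_{ji}(x)}$ and the matrix with entries $(A\xi(x))_{ij}=A\xi_{ij}(x)$, so this is a purely algebraic identity pointwise in $x\in G$.

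First I would expand the $(i,i)$-entry of this product to obtain
\[
(\sigma_{\textnormal{glob}}(x,\xi))_{ii}=\sum_{k=1}^{d_\xi}\overline{\xi_{ki}(x)}\,A\xi_{ki}(x),
\]
and then sum over $i$ to get
\[
\textnormal{Tr}(\sigma_{\textnormal{glob}}(x,\xi))=\sum_{i,k=1}^{d_\xi}\overline{\xi_{ki}(x)}\,A\xi_{ki}(x).
\]
Second, I would integrate this identity against the normalised Haar measure $dx$ and interchange the (finite) sum with the integral. Recognising $\int_G \overline{\xi_{ki}(x)}\,A\xi_{ki}(x)\,dx=(A\xi_{ki},\xi_{ki})$ by the definition of the $L^2(G)$ inner product, one gets
\[
\int_G\textnormal{Tr}(\sigma_{\textnormal{glob}}(x,\xi))\,dx=\sum_{i,k=1}^{d_\xi}(A\xi_{ki},\xi_{ki}),
\]
and finally a relabeling $(k,i)\mapsto(i,j)$ of the two dummy indices yields exactly $\sum_{i,j=1}^{d_\xi}(A\xi_{ij},\xi_{ij})$, which is the left-hand side of \eqref{core:rr}.

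There is essentially no analytic obstacle: no convergence issue arises because the sum is finite (it ranges over $1\le i,j\le d_\xi$), and $A\xi_{ij}\in C^\infty(G)$ since $A:C^\infty(G)\to C^\infty(G)$ and the $\xi_{ij}$ are smooth matrix coefficients. The only ``care'' point is bookkeeping of conjugates: one must keep track of the fact that the adjoint $\xi(x)^{*}$ has entries $\overline{\xi_{ji}(x)}$ rather than $\overline{\xi_{ij}(x)}$, which is precisely what makes the integrand pair $A\xi_{ki}$ with its own complex conjugate $\overline{\xi_{ki}}$ (and not with $\overline{\xi_{ik}}$), and so produces the diagonal matrix-coefficient pairings $(A\xi_{ij},\xi_{ij})$ rather than off-diagonal ones. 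Once this indexing is handled correctly, the lemma follows in two lines.
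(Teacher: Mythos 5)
Your proposal is correct and is essentially the same argument as in the paper: both proofs rest on the identity $\sigma_{\textnormal{glob}}(x,\xi)=\xi(x)^{*}A\xi(x)$ from Theorem \ref{The:quantisation:thm} and reduce the claim to the pointwise computation of $\textnormal{Tr}[\xi(x)^{*}A\xi(x)]$ followed by integration over $G$. The only cosmetic difference is that the paper expands $A\xi(x)=\xi(x)\sigma_{\textnormal{glob}}(x,\xi)$ starting from the left-hand side and then invokes unitary invariance of the trace to pass from $\textnormal{Tr}[\xi\sigma\xi^{*}]$ to $\textnormal{Tr}[\sigma]$, whereas you compute the diagonal entries of $\xi(x)^{*}A\xi(x)$ directly, which makes that last step unnecessary.
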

\begin{proof}[Proof of Lemma \ref{main:lemma}] In view of  \eqref{Global:symbol} we have that
$ 
    A\xi(x)=\xi(x)\sigma_{\textnormal{glob}}(x,\xi),
$ for every $(x,[\xi])\in G\times  \widehat{G}.$ Note that for any $1\leq i,j\leq d_\xi,$
\begin{equation}\label{Eq:ref}
     A\xi_{ij}(x)=(\xi(x)\sigma_{\textnormal{glob}}(x,\xi))_{ij}=\sum_{k=1}^{d_\xi}\xi(x)_{ik}\sigma_{\textnormal{glob}}(x,\xi)_{kj}.
\end{equation}In view of the identity
\begin{equation}\label{Er;RT}
    (A(d_\xi^{\frac{1}{2}}\xi_{ij}),d_\xi^{\frac{1}{2}}\xi_{ij})=d_\xi(A\xi_{ij},\xi_{ij})=d_\xi\smallint\limits_{G}A\xi_{ij}(x)\overline{\xi(x)}_{ij}dx,
\end{equation} and plugging \eqref{Eq:ref} into \eqref{Er;RT} we obtain that
$$ 
   \Small{\sum_{i,j=1}^{d_\xi}d_\xi(A\xi_{ij},\xi_{ij})=   d_\xi  \sum_{i,j,k=1}^{d_\xi}\smallint\limits_{G}\xi(x)_{ik}\sigma_{\textnormal{glob}}(x,\xi)_{kj}\overline{\xi(x)}_{ji}^tdx=  d_\xi\smallint\limits_{G}\textnormal{Tr}[\xi(x)\sigma_{\textnormal{glob}}(x,\xi){\xi(x)}^*]dx.}
$$ 
Now, since the spectral trace is invariant under unitary transformations, we have that
$$   d_\xi\smallint\limits_{G}\textnormal{Tr}[\xi(x)\sigma_{\textnormal{glob}}(x,\xi){\xi(x)}^*]dx=d_\xi\smallint\limits_{G}\textnormal{Tr}[\sigma_{\textnormal{glob}}(x,\xi)]dx.$$ Since $d_\xi\geq 1,$ the proof of Lemma \ref{main:lemma} is complete.
\end{proof}

\begin{proof}[Proof of  Theorem \ref{theorem}] 
 In view of \eqref{Local:Weyl:Law} applied to the basis $(\phi_j)_j=B$  we have the asymptotic formula
\begin{equation}\label{Preliminary:formula}
  \sum_{|\xi|\leq \lambda}  \sum_{i,j=1}^{d_\xi}(A(d_\xi^{\frac{1}{2}}\xi_{ij}),d_\xi^{\frac{1}{2}}\xi_{ij})= \left( \smallint\limits_{T^*\mathbb{S}(G)}\sigma_{\textnormal{loc}}(x,\xi')d\mu_L(x,\xi')\right)\lambda^n+O(\lambda^{n-1}),
\end{equation}for any $\lambda>0,$ where $\sigma_{\textnormal{loc}}\in C^\infty(T^*G)$ is the principal symbol of $A,$ and  $\mu_{L}$ denotes the  Liouville measure on $T^*\mathbb{S}(G).$ Then, the validity of \eqref{Symbol:formula¨localvsglobal} follows from Lemma \ref{main:lemma}.
To prove the equality \begin{equation}
    \lim_{\lambda\rightarrow\infty}\frac{1}{N(\lambda)}\sum_{ [\xi]\in \widehat{G}:|\xi|\leq \lambda }d_\xi\smallint\limits_G\textnormal{Tr}(\sigma_{\textnormal{glob}}(x,\xi))dx=\frac{1}{\textnormal{Vol}(T^*\mathbb{S}(G))}\smallint\limits_{T^*\mathbb{S}(G)}\sigma_{\textnormal{loc}}(x,\xi')d\mu_L(x,\xi'),
\end{equation}where $N(\lambda):=\#\{[\xi]:|\xi|\leq \lambda\}$ is the spectral function of  $\sqrt{\mathcal{L}}_G$,  note that
\begin{equation}
   \frac{1}{N(\lambda)}\smallint\limits_Gd_\xi\textnormal{Tr}(\sigma_{\textnormal{glob}}(x,\xi))dx=\left( \smallint\limits_{T^*\mathbb{S}(G)}\sigma_{\textnormal{loc}}(x,\xi')d\mu_L(x,\xi')\right)\frac{\lambda^n}{N(\lambda)}+O(\lambda^{-1}).
\end{equation}Since  $$\lambda^n\times N(\lambda)^{-1}\rightarrow 1/(\textnormal{Vol}(T^*\mathbb{S}(G)))>0,\, \lambda\rightarrow \infty,$$ (for this fact see e.g. Strichartz \cite{Strichartz}) the term of the right-hand side can be neglected since it goes to zero when $\lambda\rightarrow\infty$ proving  \eqref{Eq:Local:Weyl:Form}.
\end{proof}
\subsection{Local Weyl formula for operators of positive order}  In this subsection $A\in \Psi^m(G)$ denotes a classical pseudo-differential operator of order $m>0$.
\begin{proof}[Proof of Theorem \ref{theorem:2}] Let us use the notation in Subsection \ref{Notations}. As $(\cdot,\cdot)_{g}$ induces an inner product on the dual $\mathfrak{g}^{*},$ denote by
  $ \|\xi\|_g = \sqrt{(\xi,\xi)_g}$  the length of a
co-vector.

$T^*G$ is parallelizable, that is $T^*G\cong G\times \mathfrak{g}^*.$ Then, the (H\"ormander) principal symbol of $ \mathcal{L}_{G}$ is given by (see e.g. Wallach \cite[Page 16]{Wallach1973})
\begin{equation}
\forall (x,\theta)\in (T^*G \setminus \{0\})\cong (G\times \mathfrak{g}^{*}\setminus \{0\}),\,\sigma_{\textnormal{loc}, \mathcal{L}_{G}}(x,\theta)=(\theta,\theta)_{g}=:\|\theta\|_{g}^2.
\end{equation}Moreover, using the version of  Strichartz \cite[Theorem 1]{Strichartz:2} of Seeley's functional calculus \cite{seeley}, we have that $  \sqrt{\mathcal{L}}_{G}\in \Psi^{1}(G) $  is a pseudo-differential operator of first order. 
Since $\sqrt{\mathcal{L}}_{G}$ is not invertible, let $\sqrt{\mathcal{L}}_{G}^{-1}$ be the inverse of $\sqrt{\mathcal{L}}_{G}$ on the orthogonal complement of its kernel, i.e. if $P_0$ is the $L^2$-orthogonal projection on $\textnormal{Ker}(\sqrt{\mathcal{L}}_{G}),$ then
$$ \sqrt{\mathcal{L}}_{G}^{-1}\sqrt{\mathcal{L}}_{G}=\sqrt{\mathcal{L}}_{G}\sqrt{\mathcal{L}}_{G}^{-1}=I-P_0,\,\,\forall f\in \textnormal{Ker}(\sqrt{\mathcal{L}}_{G}),\, \sqrt{\mathcal{L}}_{G}^{-1}f:=0.  $$
That this operator agrees with the operator $f(\mathcal{L}_G),$ defined by the spectral calculus where $f(t)=t^{-1},$ and that $\sqrt{\mathcal{L}}_{G}^{-1}\in \Psi^{-1}(G) $  can be found in Shubin \cite[Chapter II, Page 93]{Shubin}, see also the remark in  Strichartz \cite[Page 712]{Strichartz:2}. 

Let us consider the operator $\sqrt{\mathcal{L}}_{G}^{-m}:=(\sqrt{\mathcal{L}}_{G}^{-1})^m$ defined by the spectral calculus, and define the pseudo-differential operator
\begin{equation}
    \tilde{A}=A\sqrt{\mathcal{L}}_{G}^{-m}\in \Psi^{0}(G).
\end{equation} One also has the property (see Shubin \cite[Chapter II, Page 93]{Shubin}) 
$$ \sqrt{\mathcal{L}}_{G}^{-m}\sqrt{\mathcal{L}}_{G}^m=\sqrt{\mathcal{L}}_{G}^m\sqrt{\mathcal{L}}_{G}^{-m}=I-P_0,\,\,\forall f\in \textnormal{Ker}(\sqrt{\mathcal{L}}_{G}),\, \sqrt{\mathcal{L}}_{G}^{-m}f:=0.  $$ In view of the identity
$$A=AI=A(\sqrt{\mathcal{L}}_{G}^{-m}\sqrt{\mathcal{L}}_{G}^{m}+P_0)= \tilde{A}\sqrt{\mathcal{L}}_{G}^m+AP_{0}, $$
we have that for any $\xi\in \textnormal{Rep}(G),$  with $\xi\neq 1_{\widehat{G}},$ where $1_{\widehat{G}}$ denotes the irreducible trivial representation, one has that $P_0\xi(x)=0,$ and consequently
\begin{align*}
    \sigma_{\textnormal{glob}}(x,\xi)=\xi(x)^*(A\xi(x))=\xi(x)^*\tilde{A}\sqrt{\mathcal{L}}_{G}^m\xi(x)=\xi(x)^*(A\xi(x))=(\xi(x)^*\tilde{A}\xi(x))|\xi|^{m}I_{d_\xi}.
\end{align*}From this, note that the matrix-valued symbol $\tilde{\sigma}_{\textnormal{glob}}(x,\xi)=\xi(x)^*\tilde{A}\xi(x)$  of $\tilde{A},$ on every representation space (different from the trivial one, which corresponds to $|\xi|=0$) is given by
\begin{equation}
     \forall (x,\xi)\in G\times (\widehat{G}\setminus\{1_{\widehat{G}}\} ),\,\, \tilde \sigma_{\textnormal{glob}}(x,\xi)=\sigma_{\textnormal{glob}}(x,\xi)|\xi|^{-m}.
\end{equation}  In view of Theorem \ref{theorem} we have the local Weyl formula 
\begin{equation}\label{proof:2}
    \sum_{[\xi]\in \widehat{G}:|\xi|\leq \lambda}d_\xi\smallint\limits_G\textnormal{Tr}({\tilde\sigma}_{\textnormal{glob}}(x,\xi))dx=\left( \smallint\limits_{T^*\mathbb{S}(G)}{\tilde\sigma}_{\textnormal{loc}}(x,\xi')d\mu_L(x,\xi')\right)\lambda^n+O(\lambda^{n-1}),
\end{equation}where $\tilde \sigma_{\textnormal{glob}}$ and $\tilde \sigma_{\textnormal{loc}}$ denote  the matrix-valued symbol  and the (H\"ormander) local principal symbol of $\tilde A,$ respectively.
Note that on the co-sphere, $\|\theta\|_g=1$ for all $\theta\in \mathfrak{g}^*\setminus \{0\},$ and 
\begin{equation}
    \forall (x,\theta)\in (T^*G\setminus \{0\})\cong G\times \mathfrak{g}^*\setminus \{0\},\,\, \tilde \sigma_{\textnormal{loc}}(x,\theta)=\sigma_{\textnormal{loc}}(x,\theta)\|\theta\|_g^{-m}= \sigma_{\textnormal{glob}}(x,\theta).
\end{equation} Then, we have
\begin{align*}
   & \sum_{[\xi]\in \widehat{G}:0<|\xi|\leq \lambda}d_\xi |\xi|^{-m}\smallint\limits_G \textnormal{Tr}({\sigma}_{\textnormal{glob}}(x,\xi))dx=\left( \smallint\limits_{T^*\mathbb{S}(G)}{\tilde\sigma}_{\textnormal{loc}}(x,\xi')d\mu_L(x,\xi')\right)\lambda^n+O(\lambda^{n-1})\\
     &-\smallint_{G}\textnormal{Tr}[\tilde \sigma_{\textnormal{glob} }(x,1_{\widehat{G}})]dx\\
     &=\left( \smallint\limits_{\{(x,\xi'):\|\xi'\|_{g}=1\}}{\sigma}_{\textnormal{loc}}(x,\xi')\|\xi'\|_{g}^{-m}d\mu_L(x,\xi')\right)\lambda^n+O(\lambda^{n-1})-\smallint_{G}\textnormal{Tr}[\tilde \sigma_{\textnormal{glob} }(x,1_{\widehat{G}})]dx\\
     &=\left( \smallint\limits_{T^*\mathbb{S}(G)}{\sigma}_{\textnormal{loc}}(x,\xi')d\mu_L(x,\xi')\right)\lambda^n+O(\lambda^{n-1})-\smallint_{G}\textnormal{Tr}[\tilde \sigma_{\textnormal{glob} }(x,1_{\widehat{G}})]dx.
\end{align*}For all $0<|\xi|\leq \lambda,$ we have that $\lambda^{-m}\leq |\xi|^{-m}.$ In view of \eqref{Eq:ref:trace}  we have that
\begin{align*}
  \lambda^{-m}  \sum_{[\xi]\in \widehat{G}:0<|\xi|\leq \lambda}d_\xi \smallint\limits_G\textnormal{Tr}({\sigma}_{\textnormal{glob}}(x,\xi))dx&\leq\left( \smallint\limits_{T^*\mathbb{S}(G)}{\sigma}_{\textnormal{loc}}(x,\xi')d\mu_L(x,\xi')\right)\lambda^n+O(\lambda^{n-1})\\
   &-\smallint_{G}\textnormal{Tr}[\tilde \sigma_{\textnormal{glob} }(x,1_{\widehat{G}})]dx.
\end{align*}Consequently,
\begin{align*}
   \sum_{[\xi]\in \widehat{G}:0<|\xi|\leq \lambda}d_\xi \smallint\limits_G\textnormal{Tr}({\sigma}_{\textnormal{glob}}(x,\xi))dx&\leq \left( \smallint\limits_{T^*\mathbb{S}(G)}{\sigma}_{\textnormal{loc}}(x,\xi')d\mu_L(x,\xi')\right)\lambda^{n+m}+O(\lambda^{n+m-1})\\
   &-\smallint_{G}\textnormal{Tr}[\tilde \sigma_{\textnormal{glob} }(x,1_{\widehat{G}})]dx \lambda^m.
\end{align*} The analysis above  proves \eqref{Symbol:formula¨localvsglobal:2}. Indeed, note that $\smallint_{G}\textnormal{Tr}[\tilde \sigma_{\textnormal{glob} }(x,1_{\widehat{G}})]=0.$ To see this note that $1\in \textnormal{Ker}(\mathcal{L}_G),$ and the functional calculus implies that for any function $f\in \textnormal{Ker}(\mathcal{L}_G),$  $\sqrt{\mathcal{L}_G}^{-m}f\equiv 0. $   
Then, since $1_{\widehat{G}}(x)=1$ for any $x\in G,$ we have that
\begin{align*}
    \tilde \sigma_{\textnormal{glob} }(x,1_{\widehat{G}})=1_{\widehat{G}}(x)^{*}A\sqrt{\mathcal{L}_G}^{-m}1_{\widehat{G}}(x)=A\sqrt{\mathcal{L}_G}^{-m}1=0.
\end{align*}The proof of Theorem \ref{theorem:2} is complete.   
\end{proof}
\begin{remark}\label{Remark} Let $P_{\xi}(v_{1},\cdots,v_{\xi})=v_{1}$ be the projection operator in the first component on every representation space $\mathbb{C}^{d_\xi},$ $[\xi]\in \widehat{G}.$ Note that $P_{\xi}\geq 0,$ and define $A$ to be the operator associated to the symbol $\sigma_{\textnormal{glob}}(x,\xi)=|\xi|^m P_{\xi},$ $m>0.$ Note that $A$ satisfies the condition in \eqref{Eq:ref:trace} and then the hypothesis in Theorem  \ref{theorem:2}. However, $A$ is not elliptic, indeed its matrix-valued symbol is not invertible for any $|\xi|>0.$ This is an example of a non-elliptic operator that can be analysed with Theorem  \ref{theorem:2} but cannot be treated with the standard Weyl law.
\end{remark}

\begin{remark}
[Expected value of multiplication operators] 
Let $\varkappa\in C^\infty(G)$ be a smooth function. Let 
$
    A_{\varkappa}:L^2(G)\mapsto L^2(G),\, A_{\varkappa}f:=\varkappa f,
$ be the corresponding multiplication operator by $\varkappa.$  Let us analyse the behaviour of the expected values
\begin{equation}\label{Energy:states}
     (A(d_\xi^{\frac{1}{2}}\xi_{ij}),d_\xi^{\frac{1}{2}}\xi_{ij})=d_\xi(A\xi_{ij},\xi_{ij})=d_\xi({\varkappa}\xi_{ij},\xi_{ij})
\end{equation}
where any  $\phi_{ij,[\xi]}:=d_\xi^{\frac{1}{2}}\xi_{ij}$ is an element of  the basis $B$  in \eqref{Peter:Weyl:Basis}. According to Lemma \ref{main:lemma} we have the identity
\begin{equation}\label{mult}
    \frac{1}{N(\lambda)} \sum_{|\xi|\leq \lambda}  \sum_{i,j=1}^{d_\xi}d_\xi(A\xi_{ij},\xi_{ij})= \frac{1}{N(\lambda)}\sum_{|\xi|\leq \lambda}d_\xi\smallint\limits_G\textnormal{Tr}(\sigma_{\textnormal{glob}}(x,\xi))dx,
\end{equation}where $\sigma_{\textnormal{glob}}(x,\xi)$ is the global symbol of $A_\varkappa.$ Now, due to  \eqref{Global:symbol} we have that $\sigma_{\textnormal{glob}}(x,\xi)=\xi(x)^{*}\varkappa(x)\xi(x)=\varkappa(x) I_{d_\xi},\,[\xi]\in \widehat{G},\,x\in G,$
where $I_{d_\xi}$ is the identity operator in $\mathbb{C}^{d_\xi}.$ Consequently, 
\begin{equation}
  \lim_{\lambda\rightarrow\infty}  \frac{1}{N(\lambda)}\sum_{|\xi|\leq \lambda}d_\xi\smallint\limits_G\textnormal{Tr}(\sigma_{\textnormal{glob}}(x,\xi))dx= \lim_{\lambda\rightarrow\infty}\frac{1}{N(\lambda)} \sum_{|\xi|\leq \lambda} d_\xi^2 \smallint\limits_{G}\varkappa(x)dx=\smallint\limits_{G}\varkappa(x)dx.
\end{equation}In view of  Theorem \ref{theorem}  we deduce that following simplified version of \eqref{Eq:Local:Weyl:Form}
\begin{equation}
    \frac{1}{\textnormal{Vol}(T^*\mathbb{S}(M))} \smallint\limits_{T^*\mathbb{S}(G)}\varkappa(x)d\mu_L(x,\xi')=\smallint\limits_{G}\varkappa(x)dx.
\end{equation}Note that, if the function $\varkappa\in C^\infty(G)$ is real-valued, the convergence of the average \eqref{mult} when $\lambda\rightarrow\infty,$ implies the existence of a sub-sequence $d_\xi^{\frac{1}{2}}\xi_{i_k,j_k}$ of energy states in $B$ such that $\lim_{k\rightarrow\infty}d_\xi(\varkappa\xi_{i_k,j_k},\xi_{i_k,j_k})=\smallint\limits_{G}\varkappa(x)dx,$ which shows that the Haar integral $\smallint\limits_{G}\varkappa(x)dx$ is a quantum limit of the sequence $B$ with respect to $A_\varkappa$.
\end{remark}
\begin{proof}[Proof of Corollary \ref{corollary}] Let us use a numerable correspondence between $\mathbb{N}$ and the unitary dual $\widehat{G}=\{[\xi_j]:j\in \mathbb{N}\}.$ Let us define the sequence
$$A_n:=d_{\xi_n}\smallint\limits_{G}\textnormal{Tr}[\sigma_{\textnormal{op}}(x,\xi_n)]dx.  $$ Let $S_n$ be the average sums of the sequence $\{A_{n}\}_{n\in \mathbb{N}}.$ Note that from 
Theorem \ref{theorem} we have the identity
\begin{equation}
    \lim_{n\rightarrow\infty}\frac{1}{N(n)}\sum_{[\xi_j]:|\xi_j|\leq n } A_j= \lim_{n\rightarrow\infty}\frac{1}{N(n)}S_{N(n)} =\smallint\limits_{T^*\mathbb{S}(G)}\sigma_{\textnormal{loc}}(x,\xi')d\mu_L(x,\xi').
\end{equation}Then, there exists a sub-sequence $\{A_{n_j}\}$ such that when $j\rightarrow\infty,$ 
\begin{equation}
    A_{n_j}\rightarrow \smallint\limits_{T^*\mathbb{S}(G)}\sigma_{\textnormal{loc}}(x,\xi')d\mu_L(x,\xi').
\end{equation}The proof of Corollary \ref{corollary} is complete.    
\end{proof}
{\bf Acknowledgement.} The authors would like to thank the reviewer for his/her comments which helped to improve the final version of this note.

\bibliographystyle{amsplain}

\end{document}